\theoremstyle{plain}
\newtheorem{Thm}{Theorem}
\newtheorem{Prop}[Thm]{Proposition}
\newtheorem{Conj}[Thm]{Conjecture}
\newtheorem{Subconj}{Subconjecture}
\newtheorem{Lemm}[Thm]{Lemma}
\newtheorem{Clai}{Claim}
\newtheorem{Cor}[Thm]{Corollary}
\theoremstyle{definition}
\newtheorem{Defn}[Thm]{Definition}
\newtheorem{Assum}[Thm]{Assumption}
\newtheorem{Proced}{Procedure}
\theoremstyle{remark}
\newtheorem{Rem}[Thm]{Remark}
\newtheorem{Exm}[Thm]{Example}
\theoremstyle{plain}
\newtheorem{ThmA}{Theorem A.}
\newtheorem{PropA}[ThmA]{Proposition A.}
\newtheorem{ConjA}[ThmA]{Conjecture A.}
\theoremstyle{definition}
\newtheorem{DefnA}[ThmA]{Definition A.}
\theoremstyle{remark}
\newtheorem{RemA}[ThmA]{Remark A.}
\numberwithin{equation}{section}
\def\Exc{\mathop{\mathrm{Exc}}}
\def\Bs{\mathop{\mathrm{Bs}}}
\def\Supp{\mathop{\mathrm{Supp}}}
\def\Diff{\mathop{\mathrm{Diff}}}
\begin{document}
\title{Note on quasi-numerically positive \\ log canonical divisors}
\author{\thanks{2000 \textit{Mathematics Subject Classification}: 14E30} \thanks{\textit{Key words and phrases}: numerically positive, log canonical} Shigetaka FUKUDA}
 
\date{\empty}

\maketitle \thispagestyle{empty}
\pagestyle{myheadings}
\markboth{Shigetaka FUKUDA}{Log canonical divisors}
\begin{abstract}
We propose a subconjecture that implies the semiampleness conjecture for quasi-numerically positive log canonical divisors and prove the ampleness in some elementary cases.
\end{abstract}

\section{Introduction}

In this note, every algebraic variety is defined over the field $\mathbf{C}$ of complex numbers.
We follow the terminology and notation in \cite{Ko}.

\begin{Defn}
Let $D$ be a $\mathbf{Q}$-Cartier $\mathbf{Q}$-divisor on a projective variety $X$.
The divisor $D$ is {\it numerically positive} ({\it nup}, for short), if $(D , C) > 0$ for every curve $C$ on $X$.
The divisor $D$ is {\it quasi-numerically positive} ({\it quasi-nup}, for short), if it is nef and if there exists a union $F$ of at most countably many prime divisors on $X$ such that $(D , C) > 0$ for every curve $C \nsubseteq F$ (i.e.\ if $D$ is nef and if $(D , C) > 0$ for every very general curve $C$).
\end{Defn}

\begin{Rem}
The quasi-nup divisors are the divisors ``of maximal nef dimension'' in the terminology of the ``Eight Authors'' \cite{BCEKPRSW}.
\end{Rem}

Ambro \cite{Am05} and Birkar-Cascini-Hacon-McKernan \cite{BCHM} reduced the famous log abundance conjecture to the termination conjecture for log flips and the semiampleness conjecture (Conjecture \ref{Conj}) for quasi-nup log canonical divisors $K_X + \Delta$, in the category of Kawamata log terminal ({\it klt}, for short) pairs.
In Section \ref{Sect:P} we propose a subconjecture (Subconjecture \ref{Subconj}) that implies the semiampleness Conjecture \ref{Conj}.

\begin{Rem}\label{Hist}
We state the history in detail (c.f.\ \cite{Fu14}).
In the category of klt pairs $(X, \Delta)$, Fukuda \cite{Fu02} (2002) reduced the log abundance to the existence and termination of log flips, the existence of log canonical bundle formula and the semiampleness of quasi-nup log canonical divisors, by using the numerically trivial fibrations (\cite{Tsuji}, see also \cite{BCEKPRSW}) due to Tsuji and the semiampleness criterion (\cite{Ka85a} and \cite{Ny86}, see also Fujino \cite{Fuj09}) for log canonical divisors due to Kawamata-Nakayama.
Ambro \cite{Am05} gave and proved the celebrated log canonical bundle formula.
The existence of log flips is now the theorem \cite{BCHM} due to Birkar, Cascini, Hacon and McKernan.
This history is along the line of Reid's philosophy stated in the famous Pagoda paper \cite{Re83}.

We also note two relevant theorems.
In Fukuda \cite{Fu1999} (Base point free theorem of Reid type, 1999), we proved that, if the log canonical divisor on a $\mathbf{Q}$-factorial divisorial log terminal variety is nef and log big, then it is semiample.
In Fukuda \cite{Fu11} (2011), we proved that, if the log canonical divisor on a klt variety is numerically equivalent to some semiample $\mathbf{Q}$-divisor, then it is semiample.
\end{Rem}

There is another approach to the semiampleness Conjecture \ref{Conj}.
Let $(X, \Delta)$ be a klt pair whose log canonical divisor $K_X + \Delta$ is quasi-nup.
Hacon and McKernan (Lazic \cite{La}, Theorem A.6) considered to embed $(X, \Delta)$ into some log canonical pair $(\overline{X}, \overline{\Delta})$ so that $\dim \overline{X} = \dim X + 1$ and $\overline{\Delta} \ge X$, that the log canonical divisor $K_{\overline{X}} + \overline{\Delta}$ is nef and big, that $(K_{\overline{X}} + \overline{\Delta}) \vert_X = K_X + \Delta$ and that $\overline{X}$ is endowed with the birational contraction morphism $\phi: \overline{X} \to \overline{Y}$ that contracts the prime divisor $X (= \Exc ( \phi ))$ to some point.
In Section \ref{Sect:E}, motivated by this consideration, we prove the ampleness (Theorem \ref{Thm:MT}) for log canonical pairs in some elementary cases.

In Appendix A, we survey the celebrated extension theorem (\cite{DHP}) which is recently proven by Demailly-Hacon-P\u aun.

In Appendix B, we give a straightforward proof to the theorem due to Boucksom-Cacciola-Lopez (\cite{BCL}) and Birkar-Hu (\cite{BH})-Cacciola (\cite{Ca}) that, for every divisorial log terminal pair whose log canonical divisor is strongly log big, the log canonical ring is finitely generated.

\section{Subconjecture for klt pairs}\label{Sect:P}

\begin{Conj}\label{Conj}
Let $(X,\Delta)$ be a Kawamata log terminal pair such that $X$ is projective.
If the log canonical divisor $K_X + \Delta$ is quasi-nup, then it is semiample.
\end{Conj}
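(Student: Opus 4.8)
\medskip
\noindent\emph{A strategy toward Conjecture \ref{Conj}.}
The plan is to reduce the semiampleness of $K_X+\Delta$ to its bigness and then conclude by a base-point-free theorem. Indeed, if $K_X+\Delta$ is nef and big on a klt pair, the Kawamata--Shokurov base-point-free theorem gives semiampleness at once; and even without bigness, the semiampleness criterion of Kawamata--Nakayama recalled in Remark \ref{Hist} reduces the problem to abundance, $\kappa(K_X+\Delta)=\nu(K_X+\Delta)$. The point is that for a nef divisor one has $\nu(K_X+\Delta)=\dim X$ if and only if $(K_X+\Delta)^{\dim X}>0$, i.e.\ if and only if $K_X+\Delta$ is big, in which case $\kappa=\nu=\dim X$ automatically. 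So the whole difficulty is to show that a quasi-nup log canonical divisor is big.

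First I would translate the hypothesis into the language of nef reductions: since $K_X+\Delta$ is quasi-nup it has maximal nef dimension, so its nef reduction map in the sense of \cite{BCEKPRSW} is generically finite and $n(K_X+\Delta)=\dim X$; thus the desired bigness is exactly the equality $n(K_X+\Delta)=\nu(K_X+\Delta)$. I would attack this through the structure theory underlying the reduction in Remark \ref{Hist}: there is no MMP to run, since $K_X+\Delta$ is already nef, so instead I would argue by contradiction, assuming $\nu(K_X+\Delta)<\dim X$, and analyse the Iitaka fibration of $K_X+\Delta$ (or the numerically trivial fibration of \cite{Tsuji}) together with Ambro's canonical bundle formula \cite{Am}; a positive-dimensional very general fibre of such a fibration would be swept out by curves $C$ with $(K_X+\Delta,C)=0$, contradicting quasi-numerical positivity. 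Here \cite{BCHM} serves to guarantee that the log canonical bundle formula can be applied in the klt category after the usual log resolutions.

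An alternative route, following the Hacon--McKernan construction as formulated by Lazic (\cite{La}, Theorem A.6) and sketched in the Introduction, is to embed $(X,\Delta)$ as a divisorial component of a log canonical pair $(\overline X,\overline\Delta)$ with $\dim\overline X=\dim X+1$, $\overline\Delta\ge X$, $(K_{\overline X}+\overline\Delta)\vert_X=K_X+\Delta$, with $K_{\overline X}+\overline\Delta$ nef and big, and with a birational contraction $\phi\colon\overline X\to\overline Y$ contracting $X=\Exc(\phi)$ to a point. Granting such an embedding, the base-point-free theorem for log canonical pairs (see Fujino \cite{Fuj09}) makes a multiple of $K_{\overline X}+\overline\Delta$ base-point free, and base-point freeness passes to the restriction to $X$, yielding semiampleness of $K_X+\Delta$; the cases in which this embedding can be constructed unconditionally are precisely the elementary cases treated in Theorem \ref{Thm:MT}.

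The hard part, in either route, is the same and is not routine: one must promote a statement about the nef reduction (quasi-numerical positivity, i.e.\ maximal nef dimension) to a statement about top self-intersection numbers (bigness), and for general nef divisors this fails --- the Mumford-type examples on ruled surfaces are nef and strictly positive on every curve yet have vanishing top self-intersection and are not semiample. Ruling out the log canonical analogue of this phenomenon is essentially the whole content of Conjecture \ref{Conj}, and it is exactly this gap that the subconjecture of Section \ref{Sect:P} is designed to fill; the present note accordingly confines itself to isolating that subconjecture and to verifying the conclusion directly in the elementary cases of Section \ref{Sect:E}.
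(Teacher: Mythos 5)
There is a genuine gap --- indeed, by your own admission, the central step is missing, so what you have written is a research plan rather than a proof. Note first that the statement is Conjecture \ref{Conj}: the paper itself does not prove it, but only reduces it (in Section \ref{Sect:P}) to Subconjecture \ref{Subconj}, via Procedure \ref{Proced}: granting an effective $\mathbf{Q}$-Cartier $E$ with $(E,l)<0$ for some $l\in\overline{NE}_{K_X+\Delta=0}(X)$, one contracts a $(K_X+\Delta+\epsilon E)$-extremal ray on which $K_X+\Delta$ is trivial, pulls $K_X+\Delta$ back from the image, and repeats; since the Picard number drops by one at each step, the process terminates with an ample model, and no bigness statement is ever needed. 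Your reduction is different in kind: you correctly observe that, for a quasi-nup klt log canonical divisor, semiampleness would follow from bigness by the base-point-free theorem, so the conjecture amounts to promoting maximal nef dimension to maximal numerical dimension --- but you then leave exactly that promotion unproved, and the intermediate argument you sketch for it does not work. Assuming $\nu(K_X+\Delta)<\dim X$ does not produce any fibration with positive-dimensional fibres on which $K_X+\Delta$ is numerically trivial: Tsuji's numerically trivial fibration (the nef reduction of \cite{BCEKPRSW}) for a quasi-nup divisor is generically finite by definition of quasi-nup, so there are no trivial curves through very general points to contradict, and the Iitaka fibration is not even known to exist here since nonvanishing is itself conjectural. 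The Mumford-type examples you cite show precisely that ``positive on every curve'' does not force positive top self-intersection, so no purely numerical contradiction of this shape can close the gap; Ambro's formula \cite{Am} and \cite{BCHM} do not change this.

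Your second route, via the Hacon--McKernan/Lazic embedding (\cite{La}, Theorem A.6), is likewise only a heuristic: constructing a log canonical pair $(\overline{X},\overline{\Delta})$ with $K_{\overline{X}}+\overline{\Delta}$ nef and big restricting to $K_X+\Delta$ is not known to be possible in general, and the paper uses this picture only as motivation for the unconditional but special situations of Theorem \ref{Thm:MT} in Section \ref{Sect:E} (a birational morphism with smooth prime exceptional divisor, $-E$ $f$-ample, etc.), where nef-and-log-bigness can be verified directly and a Reid-type base-point-free theorem applies. So the honest summary is: you have correctly located the difficulty (bigness, equivalently Subconjecture-type positivity), but you have not supplied the missing idea, and the one concrete argument you propose for it (trivial curves sweeping fibres when $\nu<\dim X$) fails for the reason above. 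To align with the paper, the constructive content you could salvage is the equivalence ``Conjecture \ref{Conj} $\Leftrightarrow$ quasi-nup klt log canonical divisors are big,'' which complements, but does not replace, the paper's reduction to Subconjecture \ref{Subconj}.
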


We give an approach towards the above-mentioned semiampleness conjecture in this section.
The approach repeats the process of finding some $(K_X + \Delta)$-trivial curve that generates a $(K_X + \Gamma)$-extremal ray for some other klt pair $(X, \Gamma)$ and contracting this extremal ray.
The process would terminate at the ample log canonical divisor.
To run the process, it is important not to require the $\mathbf{Q}$-factoriality of $X$.

\begin{Defn}
We define $\overline{NE}_{D=0}(X):= \{ l \in \overline{NE}(X) \vert \text{the } \text{intersection } \text{number } (D,l)=0 \}$ and $\overline{NE}_{D \ge 0}(X):= \{ l \in \overline{NE}(X) \vert (D,l) \ge 0 \}$ for a $\mathbf{Q}$-Cartier $\mathbf{Q}$-divisor $D$ on $X$.
\end{Defn}

\begin{Subconj}\label{Subconj}
Let $(X,\Delta)$ be a Kawamata log terminal pair such that $X$ is projective.
Suppose that the log canonical divisor $K_X + \Delta$ is not ample but quasi-nup.
Then there exists an effective $\mathbf{Q}$-Cartier divisor $E$ such that the intersection number $(E,l) < 0$ for some class $l \in \overline{NE}_{K_X + \Delta =0} (X)$.
\end{Subconj}

\begin{Proced}\label{Proced}
Let $(X,\Delta)$ be a Kawamata log terminal pair such that $X$ is projective.
Suppose that the log canonical divisor $K_X + \Delta$ is not ample but quasi-nup.
Assume the existence of an effective $\mathbf{Q}$-Cartier divisor $E$ on $X$ and a member $l$ of $\overline{NE}_{K_X + \Delta =0} (X)$ such that the intersection number $(E,l) < 0$.
Let $\epsilon$ be a sufficiently small positive rational number.
We can write this class $l$ in the form that $l = l_0 + l_1 + l_2 + \cdots + l_p$ $(p \ge 1)$, where $l_0 \in \overline{NE}_{K_X + \Delta + \epsilon E \ge 0}(X)$ and $\mathbf{R}_+ l_i$ $(i \ge 1)$ are distinct $(K_X + \Delta + \epsilon E)$-extremal rays. 
Then $(K_X + \Delta, l_1) = 0$, because $K_X + \Delta$ is nef and $(K_X + \Delta, l) = 0$.
We consider the birational contraction morphism $\phi : X \to X_1$ of the $(K_X + \Delta + \epsilon E)$-extremal ray $\mathbf{R}_{+} l_1$.
Put $\Delta_1 : = \phi_* (\Delta)$.
We note that the Picard number $\rho (X_1) = \rho (X) - 1$, that $K_X + \Delta = \phi^* (K_{X_1} + \Delta_1)$, that $(X_1, \Delta_1)$ is Kawamata log terminal and that $K_{X_1} + \Delta_1$ is quasi-nup.
Remark that we can permit each of the divisorial-contraction case and the small-contraction case, because we do not require the $\mathbf{Q}$-factoriality of $X_1$.
\end{Proced}

Procedure \ref{Proced} relates Subconjecture \ref{Subconj} to Conjecture \ref{Conj}.
The following is the main result of this section:

\begin{Thm}
Subconjecture \ref{Subconj} implies Conjecture \ref{Conj}.
\end{Thm}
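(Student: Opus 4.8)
\medskip
\noindent\textbf{Proof proposal.}
The plan is to argue by descending induction on the Picard number $\rho(X)$, peeling off one extremal contraction at a time by alternately invoking Subconjecture \ref{Subconj} and Procedure \ref{Proced}. Concretely: given a klt pair $(X,\Delta)$ with $X$ projective and $K_X+\Delta$ quasi-nup, if $K_X+\Delta$ is already ample we are done; otherwise Subconjecture \ref{Subconj} hands us an effective $\mathbf{Q}$-Cartier divisor $E$ and a class $l\in\overline{NE}_{K_X+\Delta=0}(X)$ with $(E,l)<0$, which are exactly the hypotheses needed to run Procedure \ref{Proced}.

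Applying Procedure \ref{Proced} to this data, I obtain a birational contraction morphism $\phi\colon X\to X_1$ and a boundary $\Delta_1=\phi_*\Delta$ such that $(X_1,\Delta_1)$ is klt, $K_{X_1}+\Delta_1$ is quasi-nup, $K_X+\Delta=\phi^*(K_{X_1}+\Delta_1)$, and crucially $\rho(X_1)=\rho(X)-1$. Now I iterate: if $K_{X_1}+\Delta_1$ is ample I stop, and otherwise I reapply Subconjecture \ref{Subconj} and Procedure \ref{Proced} to $(X_1,\Delta_1)$ to get $\phi_2\colon X_1\to X_2$ with the same list of properties and $\rho(X_2)=\rho(X_1)-1$. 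Since $\rho(X_i)\ge 1$ for every projective variety, the process must terminate after at most $\rho(X)-1$ steps, say at a klt pair $(X_n,\Delta_n)$ with $K_{X_n}+\Delta_n$ quasi-nup; and this final divisor is forced to be ample, since if it were not, Subconjecture \ref{Subconj} and Procedure \ref{Proced} would yield yet another contraction and contradict termination.

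To conclude, set $\pi:=\phi_n\circ\cdots\circ\phi_1\colon X\to X_n$ and compose the pullback identities furnished by the successive applications of Procedure \ref{Proced} to get $K_X+\Delta=\pi^*(K_{X_n}+\Delta_n)$. Since $K_{X_n}+\Delta_n$ is ample, a positive integer multiple $m(K_{X_n}+\Delta_n)$ is Cartier and globally generated, hence so is its pullback $\pi^*\!\big(m(K_{X_n}+\Delta_n)\big)=m(K_X+\Delta)$; therefore $K_X+\Delta$ is semiample, which is the assertion of Conjecture \ref{Conj}.

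I do not expect a genuine obstacle in this reduction: the substantive work has been front-loaded into Subconjecture \ref{Subconj} and Procedure \ref{Proced}, and what remains is bookkeeping — checking that the output hypotheses of Procedure \ref{Proced} ($X_i$ projective, $(X_i,\Delta_i)$ klt, $K_{X_i}+\Delta_i$ quasi-nup) reproduce its input hypotheses so the iteration is legitimate, and noting that strictly decreasing Picard numbers force the process to stop. The one place I would double-check is the step ``not ample $\Rightarrow$ Procedure \ref{Proced} applies'' at each intermediate stage, but this is exactly the content of Subconjecture \ref{Subconj} applied to $(X_i,\Delta_i)$.
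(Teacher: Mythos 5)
Your proposal is correct and follows essentially the same argument as the paper: iterate Subconjecture \ref{Subconj} together with Procedure \ref{Proced}, note that the Picard number drops by one at each step so the iteration terminates at a klt pair whose log canonical divisor is ample, and pull back along the composed birational morphism to conclude semiampleness of $K_X+\Delta$. Your write-up is merely more explicit than the paper's about the final step (a multiple of the ample limit being Cartier and globally generated, hence its pullback too), which the paper leaves implicit.
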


\begin{proof}
Let $(X, \Delta)$ be a Kawamata log terminal pair such that $X$ is projective and the log canonical divisor $K_X + \Delta$ is quasi-nup.
If Subconjecture \ref{Subconj} is true, then, by repeating Procedure \ref{Proced}, we obtain a Kawamata log terminal pair $(X', \Delta')$ with the birational morphism $\psi : X \to X'$ such that $K_X + \Delta = \psi^* (K_{X'} + \Delta')$ and that $K_{X'} + \Delta'$ is ample, because the Picard numbers decrease 1 by 1 in the process of contraction of extremal rays.
\end{proof}

\begin{Cor}
Subconjecture \ref{Subconj} and the termination conjecture for log flips imply the log abundance conjecture for klt pairs.
\end{Cor}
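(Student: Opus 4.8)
The plan is to derive the corollary formally by combining the Theorem just proved with the reduction of log abundance recalled in the introduction. Recall that, by Ambro \cite{Am} and Birkar--Cascini--Hacon--McKernan \cite{BCHM} (building on Fukuda \cite{Fu02}, on the numerically trivial fibrations of Tsuji \cite{Tsuji}, and on the Kawamata--Nakayama semiampleness criterion), the log abundance conjecture for klt pairs is implied by the conjunction of two statements: the termination conjecture for log flips, and the semiampleness Conjecture \ref{Conj} for quasi-nup log canonical divisors of projective klt pairs. Along the route of \cite{Fu02}, log abundance for a klt pair $(X,\Delta)$ is reduced to (i) the existence and termination of log flips, (ii) the existence of the log canonical bundle formula, and (iii) the semiampleness of quasi-nup log canonical divisors; and here (ii) is now Ambro's theorem \cite{Am}, while the existence half of (i) is the theorem of \cite{BCHM}.

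First I would invoke the Theorem: under Subconjecture \ref{Subconj}, Conjecture \ref{Conj} holds, so every quasi-nup log canonical divisor of a projective klt pair is semiample. Next I would feed this into the reduction above, together with the assumed termination of log flips. Since the existence of log flips \cite{BCHM} and the log canonical bundle formula \cite{Am} are by now unconditional, the only inputs still needed for the reduction are precisely ``termination of log flips'' (assumed in the corollary) and ``semiampleness of quasi-nup log canonical divisors of klt pairs'' (now available via the Theorem and Subconjecture \ref{Subconj}). Hence the log abundance conjecture for klt pairs follows.

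The argument is essentially a citation chase, so I do not expect a genuine obstacle; the one point that requires care is bookkeeping. One must verify that the reduction of \cite{Am}, \cite{BCHM} (equivalently of \cite{Fu02}) uses no conjectural ingredient beyond termination of flips and Conjecture \ref{Conj} --- in particular that the minimal model program applied to $K_X+\Delta$ stays within the category of klt pairs, and that Tsuji's numerically trivial fibration together with the Kawamata--Nakayama semiampleness criterion indeed isolates the quasi-nup case as the sole residual difficulty. This is exactly what the cited papers establish, so the task reduces to quoting them correctly.
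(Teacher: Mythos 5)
Your proposal is correct and is exactly the paper's argument: the paper's proof simply says ``See Remark \ref{Hist} and the theorem above,'' i.e.\ combine the Theorem (Subconjecture \ref{Subconj} implies Conjecture \ref{Conj}) with the reduction of log abundance to termination of flips plus Conjecture \ref{Conj} recorded in Remark \ref{Hist} via \cite{Fu02}, \cite{Am}, \cite{BCHM}. You have merely spelled out the citation chase that the paper leaves implicit.
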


\begin{proof}
See Remark \ref{Hist} and the theorem above.
\end{proof}

\begin{Rem}
From the corollary above and the existence theorem \cite{Ka} for extremal rational curves by Kawamata, we can say that the log abundance conjecture is the existence problem for some kind of rational curves, modulo the termination of log flips.
\end{Rem}

We show that Subconjecture \ref{Subconj} is a part of Conjecture \ref{Conj}.

\begin{Lemm}\label{Lemm}
Let $(X, \Delta)$ be a Kawamata log terminal pair such that $X$ is projective.
Suppose that $K_X + \Delta$ is not ample but quasi-nup and semiample.
Then there exists an effective $\mathbf{Q}$-Cartier divisor $E$ such that the intersection number $(E,l) < 0$ for some class $l \in \overline{NE}_{K_X + \Delta =0} (X)$.
\end{Lemm}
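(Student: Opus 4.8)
The plan is to exploit the semiample fibration attached to $K_X + \Delta$, to use the quasi-nup hypothesis to force that fibration to be generically finite, and then to manufacture $E$ by Kodaira's lemma. Since $K_X + \Delta$ is semiample, there is a surjective morphism $f \colon X \to Y$ onto a projective variety $Y$ together with an ample $\mathbf{Q}$-Cartier $\mathbf{Q}$-divisor $A$ on $Y$ such that $K_X + \Delta \sim_{\mathbf{Q}} f^{*}A$; for an irreducible curve $C$ on $X$ we then have $(K_X + \Delta, C) = 0$ if and only if $f$ contracts $C$.

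First I would prove that $\dim Y = \dim X$. If not, let $F = \bigcup_{i} F_i$ be an at most countable union of prime divisors witnessing the quasi-nup property. As $X$ is a variety over the uncountable field $\mathbf{C}$, it is not a union of countably many proper closed subsets, so there is a point $x \in X \setminus F$. Since $f$ is surjective, every component of every fibre of $f$ has dimension $\ge \dim X - \dim Y \ge 1$; let $X_x$ be the component of $f^{-1}(f(x))$ containing $x$ and pick an irreducible curve $C \subseteq X_x$ through $x$. Then $C \nsubseteq F_i$ for every $i$ (each $F_i$ avoids the point $x$ of $C$), hence $C \nsubseteq F$, while $f(C)$ is a point, so $(K_X + \Delta, C) = (f^{*}A, C) = 0$ --- contradicting the defining property of $F$. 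Thus $f$ is generically finite.

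Next, because $K_X + \Delta \sim_{\mathbf{Q}} f^{*}A$ is not ample, $f$ cannot be finite, so some fibre $f^{-1}(y_0)$ is positive-dimensional; choose an irreducible curve $C \subseteq f^{-1}(y_0)$ and set $l := [C] \in \overline{NE}_{K_X + \Delta = 0}(X)$. Fix an ample Cartier divisor $G$ on $Y$. Then $f^{*}G$ is nef with $(f^{*}G)^{\dim X} = (\deg f)\,G^{\dim Y} > 0$, so $f^{*}G$ is big; by Kodaira's lemma we may write $f^{*}G \sim_{\mathbf{Q}} A' + E$ with $A'$ an ample $\mathbf{Q}$-divisor and $E$ effective (and $E$ is $\mathbf{Q}$-Cartier, being $\mathbf{Q}$-linearly equivalent to the $\mathbf{Q}$-Cartier divisor $f^{*}G - A'$). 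Since $f_{*}C = 0$, intersecting with $C$ gives
\[
(E, l) = (f^{*}G, C) - (A', C) = -\,(A', C) < 0 ,
\]
and this $E$ is as required.

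I expect the one real obstacle to be the dimension count in the second paragraph: one must rule out $\dim Y < \dim X$, for in that case the $(K_X + \Delta)$-trivial curves sweep out a dense subset of $X$ and no effective $E$ as in the statement can exist --- indeed $K_X + \Delta$ would then fail to be quasi-nup. This is precisely where the hypothesis ``quasi-nup'', rather than merely ``nef and semiample'', enters, through the very general curve argument built on the uncountability of $\mathbf{C}$. Once $f$ is known to be generically finite, the construction of $E$ is routine.
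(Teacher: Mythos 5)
Your proof is correct and takes essentially the same route as the paper: pass to the semiample fibration, use the quasi-nup hypothesis to see it is generically finite (the paper phrases this as birationality via Stein factorization), obtain a contracted curve from the failure of ampleness, and apply Kodaira's lemma to the pulled-back ample divisor to produce $E$ with $(E,l)<0$. Your explicit very-general-curve argument merely spells out a step the paper leaves terse.
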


\begin{proof}
Consider the surjective morphism $\phi :X \to Y ( = \Phi_{\vert k(K_X + \Delta) \vert}(X))$ induced by the linear system $\vert k(K_X + \Delta) \vert$ for a sufficiently large and divisible integer $k$.
This morphism $\phi$ becomes birational, because of the Stein factorisation
theorem and the fact that the pull-backs of ample divisors by finite
morphisms are ample.
Then $k(K_X + \Delta) = \phi^* H$ for an ample divisor $H$ on $Y$.
By the Kodaira Lemma, if $m$ is sufficiently large and divisible, then $m \phi^* H = A + E$ for some ample divisor $A$ and some effective divisor $E$.
For every $\phi$-exceptional curve $C$, we obtain the inequality that $(E, C) < 0$, because $(m \phi^* H, C)=0$ and $(A,C) >0$.
Here the class $[C]$ belongs to $\overline{NE}_{K_X + \Delta =0} (X)$.
\end{proof}

\begin{Prop}
Conjecture \ref{Conj} implies Subconjecture \ref{Subconj}.
\end{Prop}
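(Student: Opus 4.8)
The plan is to obtain Subconjecture~\ref{Subconj} as a formal consequence of Lemma~\ref{Lemm}, using Conjecture~\ref{Conj} only to upgrade ``quasi-nup'' to ``quasi-nup and semiample''. So I would start from an arbitrary Kawamata log terminal pair $(X,\Delta)$ with $X$ projective for which $K_X+\Delta$ is not ample but quasi-nup --- precisely the hypotheses of Subconjecture~\ref{Subconj} --- and aim to produce the effective $\mathbf{Q}$-Cartier divisor $E$ together with a class $l\in\overline{NE}_{K_X+\Delta=0}(X)$ satisfying $(E,l)<0$.

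The steps, in order, are as follows. First, apply Conjecture~\ref{Conj} to $(X,\Delta)$: since $X$ is projective and $K_X+\Delta$ is quasi-nup, the conjecture yields that $K_X+\Delta$ is semiample. Second, note that the hypothesis ``$K_X+\Delta$ not ample'' is untouched, so $(X,\Delta)$ now satisfies every hypothesis of Lemma~\ref{Lemm}: the pair is Kawamata log terminal, $X$ is projective, and $K_X+\Delta$ is not ample but quasi-nup and semiample. Third, invoke Lemma~\ref{Lemm}, whose conclusion is verbatim the conclusion of Subconjecture~\ref{Subconj}. This finishes the proof.

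I do not expect a genuine obstacle in this direction, since all the geometric content has already been extracted in Lemma~\ref{Lemm}. The one point worth keeping in mind --- and it is handled there --- is that semiampleness is used to realise a large and divisible multiple $k(K_X+\Delta)$ as $\phi^{*}H$ for an ample divisor $H$ on the image $Y$ of the morphism $\phi$ attached to $\vert k(K_X+\Delta)\vert$, and that $\phi$ is birational; this birationality is where the quasi-nup hypothesis (equivalently, maximal nef dimension) enters, via Stein factorisation and the fact that pull-backs of ample divisors under finite morphisms are ample. Once $\phi$ is birational and not an isomorphism --- the latter because $K_X+\Delta$ is not ample --- the divisor $K_X+\Delta$ is big, Kodaira's lemma supplies the effective $E$ on a suitable multiple of $\phi^{*}H$, and $E$ is negative on any $\phi$-exceptional curve $C$, whose class lies in $\overline{NE}_{K_X+\Delta=0}(X)$. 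Hence, modulo citing Lemma~\ref{Lemm}, the argument is immediate.
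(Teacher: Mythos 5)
Your proposal is correct and follows exactly the paper's route: the paper's proof of this Proposition is simply to invoke Lemma~\ref{Lemm}, with Conjecture~\ref{Conj} supplying the semiampleness needed to meet the Lemma's hypotheses, which is precisely your argument. The additional recap of the Lemma's proof is accurate but not needed for this step.
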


\begin{proof}
Lemma \ref{Lemm} gives the assertion.
\end{proof}

\section{Log canonical pairs in some elementary cases}\label{Sect:E}

We prove the ampleness for log canonical pairs in some elementary cases.

\begin{Assum}\label{Assum:I}
Let $f: X \to Y$ be a birational morphism between normal projective varieties of dimension $n$ such that $E:= \Exc ( f )$ is a prime divisor and let $(X,\Delta)$ and $(X, \Delta + E)$ be divisorial log terminal pairs.
Assume that $K_X + \Delta + E$ is nup.
\end{Assum}

\begin{Prop}\label{Prop:I}
Under Assumption \ref{Assum:I}.
The divisor $K_X + \Delta + (1 - \epsilon ) E$ is nef for every small number $\epsilon > 0$.
\end{Prop}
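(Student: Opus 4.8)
The plan is to combine the Cone Theorem with the fact that $-E$ is $f$-ample. Since $X$ is smooth and $E$ is a reduced smooth divisor, $(X,(1-\epsilon)E)$ is Kawamata log terminal for $0<\epsilon<1$, and by the Cone Theorem for this pair it is enough to verify $(K_X+f^*H+(1-\epsilon)E,\ell)\ge 0$ on each $(K_X+(1-\epsilon)E)$-negative extremal ray $\mathbf{R}_+\ell$, because on $\overline{NE}_{K_X+(1-\epsilon)E\ge 0}(X)$ the divisor equals $(K_X+(1-\epsilon)E)+f^*H$, a sum of a class nonnegative there and a nef class. For the second ingredient: since $f$ is a projective birational morphism whose exceptional locus is the single prime divisor $E$ and $X$ is $\mathbf{Q}$-factorial, $-E$ is $f$-ample (standard); in particular $(E,C)<0$ for every curve $C$ contracted by $f$.

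Now fix a $(K_X+(1-\epsilon)E)$-negative extremal curve $\ell$ and write $(K_X+f^*H+(1-\epsilon)E,\ell)=(K_X+f^*H+E,\ell)-\epsilon(E,\ell)$, where the first term is positive because $K_X+f^*H+E$ is nup. If $(E,\ell)\le 0$ there is nothing to prove, and this covers every $f$-exceptional $\ell$. For a ray with $(E,\ell)>0$ — which is then not contracted by $f$ — nonnegativity holds precisely when $\epsilon\le(K_X+f^*H+E,\ell)/(E,\ell)$ (conceptually: the contraction $g$ of $\mathbf{R}_+\ell$ makes $K_X+f^*H+E$ as well as $\epsilon E$ $g$-ample, so their difference is $g$-nef for $\epsilon$ below that ratio). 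So everything reduces to a \emph{uniform} positive lower bound for these ratios over all rays with $(E,\ell)>0$.

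I would obtain such a bound by a compactness argument on a fixed ample slice of $\overline{NE}(X)$. A failure would give $\epsilon_i\downarrow 0$ and normalized classes $v_i$ with $0\le(K_X+f^*H+E,v_i)<\epsilon_i(E,v_i)$ and $(E,v_i)>0$; a convergent subsequence $v_i\to v\ne 0$ then lies in the face $\mathcal{F}:=\overline{NE}_{K_X+f^*H+E=0}(X)$ and satisfies $(E,v)\ge 0$. But the Cone Theorem confines $\mathcal{F}$ to $\overline{NE}_{K_X+(1-\epsilon)E\ge 0}(X)$ for every small $\epsilon>0$ (the negative part is spanned by extremal rays on which the nup divisor $K_X+f^*H+E$ is strictly positive, so $\mathcal{F}$ meets it only in $0$); hence $(K_X+(1-\epsilon)E,v)\ge 0$ for all small $\epsilon$, and since $f^*H$ is nef and $(K_X+f^*H+E,v)=0$ this forces $(f^*H,v)=0$ and $(K_X+E,v)=0$, whence $\epsilon(E,v)=(K_X+E,v)-(K_X+(1-\epsilon)E,v)\le 0$. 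Together with $(E,v)\ge 0$ this gives $(E,v)=0$.

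The crux — the step I expect to be the main obstacle — is to exclude this boundary configuration: a nonzero $v\in\mathcal{F}$ that is trivial against each of $K_X$, $E$, $f^*H$ and is approached by classes $v_i$ with $(E,v_i)>0$. Here the extra structure to use is that $K_X+f^*H+E$, being nup, is strictly positive on every $f$-exceptional curve, so $f_*$ is injective on $\mathcal{F}$ and $f_*v\ne 0$; combined with $(f^*H,v)=0$ (so $f_*v$ is contracted by the morphism attached to $\vert H\vert$ on $Y$), the irreducibility of the general member $H$ (Bertini), and the local structure of $f$ along $\Exc(f)=E$, one should reach a contradiction with $(E,v)=0$. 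Granting that, the compactness argument supplies a uniform $\epsilon_0>0$, and $K_X+f^*H+(1-\epsilon)E$ is nef for every $\epsilon\in(0,\epsilon_0)$.
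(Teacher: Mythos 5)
Your reduction is set up plausibly (pass to the cone of a klt pair, check only negative extremal rays, and bound $\epsilon$ by the ratios $(K_X+f^*H+E,\ell)/(E,\ell)$), but the proof has a genuine gap exactly where you flag it: the uniform positive lower bound on those ratios is the whole content of the proposition, and the compactness argument you sketch cannot deliver it. A limit of normalized classes $v_i$ only lands in the closed cone $\overline{NE}(X)$, and $K_X+f^*H+E$ is merely nup, not ample, so nothing prevents it from vanishing on a nonzero boundary class; positivity on curves (nup) says nothing about such limit classes, which need not be represented by curves at all. So the ``boundary configuration'' you propose to exclude cannot be ruled out by soft cone/limit arguments, and the appeal to injectivity of $f_*$ on the face, Bertini, and the local structure of $f$ is a hope, not a proof. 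Two further slips: $f$-ampleness of $-E$ is part of Assumption \ref{Assum:II}, not Assumption \ref{Assum:I} (and the paper's remark derives it from $\mathbf{Q}$-factoriality of $Y$, not of $X$), though this is inessential to your plan; and your denominator $(E,\ell)$ is not a quantity controlled by any length bound, which makes your ratio harder to estimate than necessary.

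The missing idea is discreteness rather than compactness, and it is what the paper uses. Every $(K_X+f^*H)$-extremal ray is extremal for the klt perturbation $K_X+(1-\eta)f^*H$, so Kawamata's theorem \cite{Ka} bounds the length: each such ray is generated by a rational curve $C$ with $0<-(K_X+f^*H,C)$ bounded above by a constant depending only on $n$. Since $X$ is smooth, $K_X+f^*H+E$ is Cartier, and nup gives $(K_X+f^*H+E,C)\ge 1$. Following the argument of \cite{KeMaMc}, the infimum $\nu$ of $(K_X+f^*H+E,C)/(-(K_X+f^*H,C))$ over extremal rational curves is therefore bounded below by a positive constant, so $K_X+f^*H+E+\nu(K_X+f^*H)=(1+\nu)\bigl(K_X+f^*H+\tfrac{1}{1+\nu}E\bigr)$ is nef (it is nonnegative on $\overline{NE}_{K_X+f^*H\ge 0}(X)$ because $K_X+f^*H+E$ is nef there, and nonnegative on each negative extremal ray by the definition of $\nu$); convex combinations with the nef divisor $K_X+f^*H+E$ then give nefness of $K_X+f^*H+(1-\epsilon)E$ for all small $\epsilon>0$. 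If you replace your compactness step by this length-plus-integrality bound (and use the paper's ratio, with $-(K_X+f^*H,C)$ in the denominator), your argument closes.
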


\begin{proof}
The result \cite{Ka} of Kawamata for klt pairs and its variant (\cite{Sh}, Proposition 1) of Shokurov for dlt pairs give the boudedness of the length of $(K_X + \Delta)$-extremal rays.
By using the argument in \cite{KeMaMc}, we have $\nu := \inf \{ \frac{(K_X + \Delta + E, C)}{-(K_X + \Delta , C)} \vert $C$ \text{ is } \text{an } \text{extremal } \text{rational } \text{curve } \text{for } K_X + \Delta \} > 0 $.
Thus $K_X + \Delta + E + \nu (K_X + \Delta)$ is nef.
\end{proof}

\begin{Assum}\label{Assum:II}
Furthermore assume that $K_Y + f_* \Delta$ is $\mathbf{Q}$-Cartier, that $-E$ is $f$-ample, and that, in the case where $f (E)$ is not a point, the divisor $(K_Y + f_* \Delta) \vert_{f (E)}$ is ample.
\end{Assum}

\begin{Rem}
If $Y$ is $\mathbf{Q}$-factorial, then the condition that $-E$ is $f$-ample in Assumption \ref{Assum:II} is automatically satisfied, under Assumption \ref{Assum:I}.
(cf.\ Koll\'ar-Mori \cite{KoMo}, Lemma 2.62)
\end{Rem}

\begin{Defn}
Under Assumptions \ref{Assum:I} and \ref{Assum:II}.
We define the number $\lambda$ by the equation $K_X + \Delta + E = f^* (K_Y + f_* \Delta) + (1+\lambda) E$.
Then $1 + \lambda < 0$, because $K_X + \Delta + E$ is nup.
\end{Defn}

\begin{Prop}\label{Prop:II}
Under Assumptions \ref{Assum:I} and \ref{Assum:II}.
The divisor $K_X + \Delta + E$ is big.
\end{Prop}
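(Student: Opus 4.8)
The plan is to prove that $(K_X+f^*H+E)^n>0$; since $K_X+f^*H+E$ is nup, hence nef, this is equivalent to bigness. Write $D:=K_X+f^*H+E$ and $\mu:=-(1+\lambda)>0$, so that $f^*(K_Y+H)=D+\mu E$. First I would note that $K_Y+H$ is nef (indeed nup) on $Y$: on a curve $C\subseteq f(E)$ one has $(K_Y+H,C)=((K_Y+H)|_{f(E)},C)>0$ by Assumption~\ref{Assum:II}, while for a curve $C\not\subseteq f(E)$ with strict transform $\widetilde C$ one has $\widetilde C\not\subseteq E$, so $(K_Y+H,C)=(D+\mu E,\widetilde C)=(D,\widetilde C)+\mu(E,\widetilde C)>0$ because $D$ is nup. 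Hence $f^*(K_Y+H)=D+\mu E$ is nef, and for $t\in[0,\mu]$ the divisor $D+tE=(1-t/\mu)D+(t/\mu)f^*(K_Y+H)$ is a convex combination of nef divisors, hence nef. Combining this with Proposition~\ref{Prop:I}, the divisor $D+tE$ is nef for every $t$ in an interval $[-\epsilon_0,\mu]$ having $0$ in its interior.

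Next I would reduce the problem to bigness of $D|_E$ on $E$. Set $p(t):=(D+tE)^n$, a polynomial in $t$ which, by the previous paragraph, is $\ge 0$ on $[-\epsilon_0,\mu]$. If $D$ were not big then $p(0)=D^n=0$, so $0$ is an interior minimum of $p$ and $p'(0)=n\,D^{n-1}\cdot E=0$. But $D^{n-1}\cdot E=(D|_E)^{n-1}=\mathrm{vol}_E(D|_E)$, because $D|_E$ is nef on the smooth projective variety $E$. So it suffices to prove that $D|_E$ is big on $E$, which is an $(n-1)$-dimensional problem of essentially the same shape: by adjunction $D|_E=K_E+(f|_E)^*\bigl(H|_{f(E)}\bigr)$ is nup on $E$ (restriction of the nup $D$, since curves in $E$ are curves in $X$), and equivalently $D|_E=(f|_E)^*\bigl((K_Y+H)|_{f(E)}\bigr)-\mu(E|_E)$, where $-E|_E$ is relatively ample for $f|_E$, and where $(K_Y+H)|_{f(E)}$ is ample once $\dim f(E)>0$ (Assumption~\ref{Assum:II}) --- while $D|_E=\mu(-E|_E)$ is already ample when $f(E)$ is a point, which in particular settles the case $n=2$.

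The hard part is precisely this last assertion --- ruling out $\mathrm{vol}_E(D|_E)=0$ --- and I would attack it by induction on $\dim X$, imitating the two-sided self-intersection argument above on $E$ with the class $E|_E$ in place of $E$: one has $D|_E+s(E|_E)$ nef for $s$ in a two-sided neighbourhood of $0$, its value at $s=\mu$ is $(f|_E)^*\bigl((K_Y+H)|_{f(E)}\bigr)$, whose numerical dimension is $\le\dim f(E)\le n-2$, and the general fibre $E_w$ of $f|_E$ carries $(D|_E)|_{E_w}=K_{E_w}$ (adjunction and generic smoothness). The obstacle is that the mixed numbers $(D|_E)^{n-1-k}\cdot(E|_E)^k$ need not have controllable signs, since $-E|_E$ is only \emph{relatively} ample and $E|_E$ need not be effective, so one cannot expand $(D|_E)^{n-1}$ termwise; what must be exploited is the strict positivity supplied by the \emph{ampleness} (not mere nefness) of $(K_Y+H)|_{f(E)}$, which enters through the fibre dimension $n-1-\dim f(E)$ of $f|_E$. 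Organising this positivity so that it forbids the polynomial $s\mapsto(D|_E+s(E|_E))^{n-1}$ from vanishing at $s=0$ is the crux of the proof.
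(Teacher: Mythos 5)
Your reduction is correct as far as it goes (the nefness of $K_Y+H$ and hence of $D+tE$ for $t\in[0,\mu]$ is a valid observation, and the two-sided derivative argument does show that non-bigness of $D$ forces $(D|_E)^{n-1}=0$), but the proof has a genuine gap exactly where you flag it: the bigness of $D|_E$ when $\dim f(E)>0$ is never established, and the induction you sketch stalls at the same missing positivity in every dimension. Worse, the sub-goal you reduce to is strictly stronger than what is needed and cannot follow from the formal data your induction retains. Abstractly, a divisor of the form $g^*A+cL$ with $A$ ample on the base and $L$ merely $g$-ample need not be big: on $E=Z\times\mathbf{P}^1\to Z$ take $\xi$ the pullback of a point from the second factor and $L:=\xi-g^*A$, which is relatively ample (relative ampleness is tested only on fibres), yet $\xi=g^*A+L$ is nef with $\xi^{\dim E}=0$. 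So "$-E|_E$ is $f|_E$-ample and $(K_Y+H)|_{f(E)}$ is ample", which is all your inductive set-up keeps, cannot by itself yield $\mathrm{vol}_E(D|_E)>0$.

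The paper closes this by never asking for bigness of $D|_E$; it needs only one intersection number. Either $D-\epsilon E$ is big for some small $\epsilon>0$, and then $D=(D-\epsilon E)+\epsilon E$ is big because $E$ is effective; or, by Proposition \ref{Prop:I}, $(D-\epsilon E)^n=0$ for all small $\epsilon>0$, so the polynomial $t\mapsto\bigl(f^*(K_Y+H)+tE\bigr)^n$ vanishes on an interval, hence identically. Now expand in the basis $\{f^*(K_Y+H),E\}$ rather than $\{D,E\}$: with $d:=\dim f(E)$, the terms $f^*(K_Y+H)^{n-k}\cdot E^{k}$ with $1\le k<n-d$ vanish for dimension reasons, all remaining coefficients are forced to vanish by the identity, and in particular $f^*(K_Y+H)^{d}\cdot E^{\,n-d}=\pm\bigl((f|_E)^*\bigl((K_Y+H)|_{f(E)}\bigr)\bigr)^{d}\cdot\bigl(-E|_E\bigr)^{\,n-1-d}=0$. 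But this number is strictly positive, since $-E|_E$ is $f|_E$-ample and $(K_Y+H)|_{f(E)}$ is ample by Assumption \ref{Assum:II} (when $d=0$ it is just $(-E|_E)^{n-1}>0$); contradiction. This change of basis is precisely the "organisation of positivity" you were missing: the uncontrollable mixed terms either die on $E$ or are killed by the polynomial identity, and only one critical term, whose sign is pinned down by relative ampleness plus Assumption \ref{Assum:II}, has to be inspected.
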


\begin{proof}
Assume that $K_X + \Delta + (1-\epsilon) E = f^* (K_Y + f_* \Delta) + (1 + \lambda - \epsilon) E$ is not big for every small number $\epsilon >0$.
Thus its self intersection number is zero for every $\epsilon$ from Proposition \ref{Prop:I}.
Therefore $(-E)^{\dim E - \dim f (E)} \cdot ( f^*(K_Y + f_* \Delta)^{\dim f (E)} \cdot E ) = 0$.
This contradicts to the $f$-ampleness of $-E$.
Consequently $K_X + \Delta + (1-\epsilon) E$ is big for every small number $\epsilon > 0$ and so is $K_X + \Delta + (1-\epsilon) E + \epsilon E$.
\end{proof}

\begin{Prop}\label{Prop:III}
Under Assumptions \ref{Assum:I} and \ref{Assum:II}.
The divisor $(K_X + \Delta + E) \vert_E $ is ample.
\end{Prop}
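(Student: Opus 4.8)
The plan is to reduce the statement to Kleiman's ampleness criterion on $E$ and then to exclude a ``null class'' by a sandwiching argument built on Proposition \ref{Prop:I}. Write $W := f(E)$ and $g := f|_E \colon E \to W$, and put $N := (-E)|_E$ and $M := (K_Y+H)|_W$. Since restriction commutes with pull-back, the defining equation of $\lambda$ gives $(K_X+f^*H+E)|_E = g^*M + aN$ with $a := -(1+\lambda) > 0$; here $N$ is $g$-ample because $-E$ is $f$-ample (Assumption \ref{Assum:II}), and $M$ is ample on $W$ when $W$ is not a point (Assumption \ref{Assum:II}), while $g^*M \equiv 0$ when $W$ is a point. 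For $t \ge 0$ set $L_t := g^*M + tN$, so $L_a = (K_X+f^*H+E)|_E$, and it suffices to prove that $L_a$ is ample.

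The key point is that $L_t$ is nef for $t$ ranging over an interval having $a$ in its interior. Indeed, $L_0 = g^*M$ is the pull-back of a nef divisor; $L_a$ is the restriction to $E$ of the nef divisor $K_X+f^*H+E$; and by Proposition \ref{Prop:I} the divisor $K_X+f^*H+(1-\epsilon)E$ is nef for all small $\epsilon>0$, hence so is its restriction $L_{a+\epsilon} = (K_X+f^*H+(1-\epsilon)E)|_E$ to $E$. Now assume, for contradiction, that $L_a$ is not ample. Being nef, $L_a$ then lies on the boundary of the nef cone of the smooth projective variety $E$, so by Kleiman's criterion there is a nonzero $\alpha \in \overline{NE}(E)$ with $L_a\cdot\alpha = 0$, that is, $g^*M\cdot\alpha + a(N\cdot\alpha) = 0$. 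Since $L_0$ is nef, $g^*M\cdot\alpha\ge 0$, which forces $N\cdot\alpha\le 0$ (as $a>0$); since $L_{a+\epsilon} = L_a + \epsilon N$ is nef, $0\le L_{a+\epsilon}\cdot\alpha = \epsilon(N\cdot\alpha)$, which forces $N\cdot\alpha\ge 0$. Hence $N\cdot\alpha = 0$, and consequently $g^*M\cdot\alpha = 0$.

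It remains to derive a contradiction from the existence of such an $\alpha$. If $W$ is a point, then $N$ is $g$-ample, i.e.\ ample on $E$, so $N\cdot\alpha>0$, a contradiction. If $W$ is not a point, then $M$ is ample on $W$ and $N$ is $g$-ample, so $N + g^*(kM)$ is ample on $E$ for a sufficiently large integer $k$; then $0 < (N+g^*(kM))\cdot\alpha = N\cdot\alpha + k(g^*M\cdot\alpha) = 0$, again a contradiction. Therefore $L_a = (K_X+f^*H+E)|_E$ is ample. The part needing the most care is the sandwiching: one genuinely needs $L_t$ to stay nef slightly beyond $t=a$ — which is precisely what Proposition \ref{Prop:I} supplies — since using only nefness on $[0,a]$ yields merely $N\cdot\alpha\le 0$; one must likewise keep track of the fact that only $g$-ampleness of $N$ is available, which is why the final contradiction is extracted from the genuinely ample divisor $N+g^*(kM)$ rather than from $N$ itself.
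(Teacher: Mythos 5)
Your proof is correct, and it uses exactly the same inputs as the paper — the nefness of $K_X+f^*H+(1-\epsilon)E$ from Proposition \ref{Prop:I}, the $f$-ampleness of $-E$, the ampleness of $(K_Y+H)\vert_{f(E)}$, and the fact (Koll\'ar--Mori, Proposition 1.45) that a $g$-ample divisor plus the pull-back of a sufficiently large multiple of an ample divisor is ample — but packages them differently. The paper argues directly: writing $(K_X+f^*H+E)\vert_E=(f^*(K_Y+H)+(1+\lambda)E)\vert_E$ and noting $-\epsilon>1+\lambda>1+\lambda-\epsilon$ for small $\epsilon$, it expresses this divisor as a convex combination, with positive weight on the ample side, of the ample divisor $f^*(K_Y+H)\vert_E-\epsilon E\vert_E$ (Koll\'ar--Mori 1.45) and the nef divisor $(K_X+f^*H+(1-\epsilon)E)\vert_E$ (Proposition \ref{Prop:I}), and concludes by ``ample plus nef is ample.'' You instead argue by contradiction through Kleiman's criterion: a putative null class $\alpha$ is squeezed between nefness at $t=0$ and at $t=a+\epsilon$ to force $N\cdot\alpha=g^*M\cdot\alpha=0$, which the ample divisor $N+g^*(kM)$ rules out. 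Your observation that nefness on $[0,a]$ alone would not suffice, and that one needs the overshoot $t=a+\epsilon$ supplied by Proposition \ref{Prop:I}, is precisely the role the inequality $-\epsilon>1+\lambda>1+\lambda-\epsilon$ plays in the paper; the paper's direct interpolation is shorter, while your cone-theoretic version makes the mechanism (why the nef segment must extend past $t=a$ on one side and meet an ample perturbation on the other) more explicit.
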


\begin{proof}
The divisor $f^*(K_Y + f_* \Delta) \vert_E - \epsilon E \vert_E$ on $E$ is ample for every small number $\epsilon >0$ (cf.\ \cite{KoMo}, Proposition 1.45).
We also recall that $f^* (K_Y + f_* \Delta) + (1 + \lambda -\epsilon)E = K_X + \Delta + (1-\epsilon)E$ is nef by Proposition \ref{Prop:I}.
Thus $(K_X + \Delta + E) \vert_E = (f^* (K_Y + f_* \Delta) + (1 + \lambda) E) \vert_E$ is ample, from the inequality $-\epsilon > 1+\lambda > 1+\lambda-\epsilon$.
\end{proof}

We state the main result of this section:

\begin{Thm}\label{Thm:MT}
Under Assumptions \ref{Assum:I} and \ref{Assum:II}.
The divisor $K_X + \Delta + E$ is ample if and only if $((K_X + \Delta + E) \vert_{\Gamma})^{\dim \Gamma} > 0$ for every minimal log canonical (i.e.\ minimal non-klt) center $\Gamma$ with respect to the pair $(X, \Delta + E)$ such that $\Gamma \cap E = \emptyset$.
\end{Thm}

For proof, we cite the following ampleness result:

\begin{Prop}[\cite{Fu04}]\label{Prop:Fu04}
Let $(M, S)$ be a divisorial log terminal pair which is not Kawamata log terminal such that $M$ is projective.
Assume that the log canonical divisor $K_M + S$ is nup and that $((K_M + S) \vert_{\Gamma})^{\dim \Gamma} > 0$ for every minimal log canonical (i.e.\ minimal non-klt) center $\Gamma$ with respect to the pair $(M, S)$.
Then $K_M + S$ is ample.
\end{Prop}

\begin{proof}[Proof of Theorem \ref{Thm:MT}]
The ``only if '' part is trivial.
So we prove the ``if'' part.

For every minimal log canonical center $\Gamma$ with respect to $(X, \Delta + E)$ such that $\Gamma \cap E \ne \emptyset$, we have that $\Gamma \subset E$ from Ambro (\cite{Am98}, Proposition 3.3) because $E$ is a log canonical center with respect to $(X, \Delta + E)$.

Thus $((K_X + \Delta + E) \vert_{\Gamma})^{\dim \Gamma} > 0$ for every minimal log canonical center $\Gamma$ with respect to the pair $(X, \Delta + E)$ by Proposition \ref{Prop:III}.

Consequently Proposition \ref{Prop:Fu04} implies that $K_X + \Delta + E$ is ample.

\end{proof}

\begin{Exm}
Let $\mathbf{P}^n$ $(n \geq 3)$ be a projective space with homogeneous coordinate $( x_0 : x_1 : x_2 : \cdots : x_n )$ and hyperplane $G$.
We consider the hypersurface $Y$ ($\subset \mathbf{P}^n$) defined by the irreducible homogeneous equation $x_1^l + x_2^l + x_3^l + \cdots + x_m^l = 0$ $(3 \leq m \leq n)$ $(l \geq n+1)$.
We note that $Y$ is normal and that $K_Y = (-(n+1)G + lG) \vert_Y = (l-(n+1))G \vert_Y$ is Cartier.
Blow up $\mathbf{P}^n$ at the subspace $\{ x_1 = x_2 = \cdots = x_m = 0 \}$ and obtain the morphism $\phi: \mathbf{P}' \to \mathbf{P}^n$ and the exceptional divisor $F$.
Let $X$ be the strict transform of $Y$ by $\phi$.
We note that $X$ is nonsingular.
We have $K_{\mathbf{P}'} = \phi^* (-(n+1)G) + (m-1)F$.
Thus $K_X = (K_{\mathbf{P}'} + X) \vert_X = ( \phi^* (-(n+1)G) + (m-1)F + \phi^* (lG) - lF ) \vert_X = (\phi^* (l-(n+1))G -(l-(m-1))F) \vert_X $.
Then $K_X + (n-m+1)(\phi^* G) \vert_X + F \vert_X = (l-m) (\phi^* G - F) \vert_X$ is nef, because the linear system $\vert \phi^*G - F \vert$ is base point free.
Consequently $K_X + (\phi^* (n-m+2)G) \vert_X + F \vert_X =  ((l-m) \phi^* G - (l-m)F + \phi^* G) \vert_X$ is nup because $-F$ is $\phi$-ample. 

Let $H$ be the restriction of a general member of $\vert (n-m+2) G \vert$ to $Y$.
We put $f := \phi \vert_X$ and $\Delta := f^* H$.
Then $E := \Exc (f) = F \vert_X$ is a smooth prime divisor and $-E = -F \vert_X$ is $f$-ample.
We note that $K_X + \Delta + E$ is nup and that $(K_Y + f_* \Delta) \vert_{f(E)} = (l-(n+1)+(n-m+2)) G \vert_{f(E)}$ is ample.
When $\Delta \cap E = \emptyset$ (i.e.\ $n=m$), the divisor $(K_X + \Delta + E) \vert_{\Delta} = (l-(n+1)+2) \phi^* G \vert_{\Delta}$ is ample.
Lastly Theorem \ref{Thm:MT} implies that $K_X + \Delta + E$ is ample.

\end{Exm}

\section*{Appendix A: A survey of the Demailly-Hacon-P\u aun extension Theorem \cite{DHP}}

In this appendix, we survey the celebrated extension theorem due to Demailly-Hacon-P\u aun:

\begin{PropA}[\cite{DHP}]\label{PropA:DHP}
Let $(M, \Delta + S)$ be a projective purely log terminal pair with a prime divisor $S$ such that $ \lfloor \Delta + S \rfloor = S $.
Assume that the log canonical divisor $K_M + \Delta + S$ is nef and that there exists an effective $\mathbf{Q}$-divisor $D$ which is $\mathbf{Q}$-linearly equivalent to $K_M + \Delta + S$ with $S \subset \Supp (D) \subset \Supp (\Delta + S)$ .
Then the restriction map
$$ H^0(X, {\cal O}_X (m(K_M + \Delta + S))) \to H^0 (S, {\cal O}_S (m(K_M + \Delta + S))) $$
is surjective for all sufficiently large and divisible integers $m$.
\end{PropA}

Let $(X,B)$ be a projective Kawamata log terminal pair whose log canonical divisor $K_X + B$ is nef.

\begin{ConjA}[Log Abundance Conjecture]\label{ConjA:LA}
The (nef) log canonical divisor $K_X + B$ is semiample.
\end{ConjA}

\begin{Subconj}\label{Subconj:LA}
There exists an effective divisor $S$ on $X$ such that $(X, B + S)$ is purely log terminal and that $S$ is linearly equivalent to some multiple of $K_X + B$.
\end{Subconj}

\begin{PropA}\label{PropA:LA}
Log Abundance Conjecture A.\ \ref{ConjA:LA} implies Subconjecture \ref{Subconj:LA}.
\end{PropA}

\begin{proof}
If the logarithmic Kodaira dimension $\kappa (X, K_X + B) = 0$, then we are done (letting $S = 0$).
So we may assume that $\kappa (X, K_X + B) \geq 1$.

For a sufficiently large and divisible integer $l$, the linear system $\vert l(K_X+B) \vert$ is base point free and gives the algebraic fiber space $f: X \to T$.
Then $l(K_X + B)$ is linearly equivalent to $f^* H$ for some hyperplane section $H$ of $T$.
Consider a log resolution $\pi : Y \to X$ of $(X,B)$ such that the morphism $\pi$ is projective, that the exceptional locus $\Exc (\pi)$ is divisorial and that the locus $\Exc (\pi) \cup \Supp (\pi^* B)$ is with only simple normal crossings.
For a general member $S'$ of the linear system $\vert \pi ^* f^* H \vert$, the divisor $S' = \sum_{i \geq 1} S'_i$ is a disjoint union of a finite number of smooth prime divisors $S'_i$.
Thus the divisor $S:= \pi (S')$ satisfies the required condition.
\end{proof}

We consider the converse statement for Proposition A.\ \ref{PropA:LA}.

\begin{Clai}\label{Clai}
Under Subconjecture \ref{Subconj:LA}.
If $S \ne 0$ and $S_i$ is an irreducible component of $S$, then we have the
following properties:

(1) the prime divisor $S_i$ is a connected component of $S$.

(2) the pair $(S_i, K_{S_i} + \Diff ( B + S - S_i))$ is Kawamata log terminal.

(3) the log canonical divisor $K_{S_i} + \Diff ( B + S - S_i)$ is nef.

(4) the prime divisor $S_i$ is $\mathbf{Q}$-Cartier and nef.

(5) the restriction map
$$H^0 (X, {\cal O}_X (l(K_X + B + S))) \to H^0 (S_i, {\cal O}_{S_i} (l(K_X + B + S)))$$
is surjective for all sufficiently large and divisible integers $l$.
\end{Clai}

\begin{proof}
(1) and (2) are the elementary facts of purely log terminal pairs.
(3) is trivial.

Because $S$ is $\mathbf{Q}$-Cartier and nef, we have (4) from the fact that $S = \sum_j S_j$ is a disjoint union of prime divisors $S_j$.

Thus $S - S_i = \sum_{j \ne i} S_j$ is a nef $\mathbf{Q}$-Cartier divisor and $\Supp (S - S_i) \cap \Supp (S_i) = \emptyset$.
For a sufficiently small rational number $\epsilon > 0$, the pair $(X, B + \epsilon (S - S_i) + S_i)$ is purely log terminal and $\lfloor B + \epsilon (S - S_i) + S_i \rfloor = S_i$.
We note that there exists an effective $\mathbf{Q}$-divisor $D$ which is $\mathbf{Q}$-linearly equivalent to $K_X + B + \epsilon (S - S_i) + S_i$ such that $\Supp D = \Supp S$.
Because $S_i \subset \Supp S \subset \Supp (B + S) = \Supp (B + \epsilon (S - S_i) + S_i)$, we have that $S_i \subset \Supp D \subset \Supp (B + \epsilon (S - S_i) + S_i)$.
So we get the following commutative diagram from Proposition A.\ \ref{PropA:DHP} (\cite{DHP}):
$$
\begin{CD}
H^0 (X, {\cal O} (l(K_X + B + \epsilon (S - S_i) + S_i))) @>\text{rest.}>\text{surj.}> H^0 (S_i, {\cal O} (l(K_X + B + \epsilon (S - S_i) + S_i))) \to 0 \\
@VVV @VV\text{identity}V \\
H^0 (X, {\cal O} (l(K_X + B + S))) @>\text{rest.}>> H^0 (S_i, {\cal O} (l(K_X + B + S)))
\end{CD}
$$
\end{proof}

\begin{ThmA}\label{ThmA:LA}
Subconjecture \ref{Subconj:LA} in dimension $\le \dim X$ implies Log Abundance Conjecture A.\ \ref{ConjA:LA}.
\end{ThmA}

\begin{proof}
If $S=0$, we are done.
So we may assume that $S = \sum S_i \ne 0$, where $S_i$ are distinct prime divisors.
We follow the notation in Claim \ref{Clai}.
By induction on dimension, the log canonical divisor $K_{S_i} + \Diff (B + S - S_i)$ is semiample.
Therefore Claim \ref{Clai} (5) implies that the base locus $\Bs \vert l (K_X + B + S) \vert$ is disjoint from $S_i$ for a sufficiently large and divisible integer $l$.
Thus $\Bs \vert l (K_X + B + S) \vert$ is disjoint from $\sum S_i = S$.
From the assumption that $S$ is $\mathbf{Q}$-linearly equivalent to some multiple of $K_X + B$, the log canonical divisor $K_X + B$ is semiample.
\end{proof}

\begin{ConjA}[Smooth Abundance Conjecture]\label{ConjA:SA}
Assume that $X$ is smooth and $B=0$.
The (nef) canonical divisor $K_X$ is semiample.
\end{ConjA}

\begin{Subconj}\label{Subconj:SA}
Assume that $X$ is smooth and $B=0$.
There exists an effective divisor $S$ such that $(X, S)$ is log smooth and purely log terminal and that $S$ is linearly equivalent to some multiple of $K_X$ .
\end{Subconj}

By the same argument as in the proofs of Proposition A.\ \ref{PropA:LA} and Theorem A.\ \ref{ThmA:LA}, we have the following two results.

\begin{PropA}
Smooth Abundance Conjecture A.\ \ref{ConjA:SA} implies Subconjecture \ref{Subconj:SA}.
\end{PropA}

\begin{ThmA}
Subconjecture \ref{Subconj:SA} in dimension $\le \dim X$ implies Smooth Abundance Conjecture A.\ \ref{ConjA:SA}.
\end{ThmA}

\section*{Appendix B: Strong log bigness}

Let $X$ be a projective variety over the field $\mathbf{C}$ of complex numbers and $\Delta$ an effective $\mathbf{Q}$-divisor on $X$ where the pair $(X, \Delta)$ is dlt (i.e.\ divisorial log terminal).

\begin{DefnA}\label{DefnA:SLB}
A $\mathbf{Q}$-Cartier $\mathbf{Q}$-divisor $D$ is {\it strongly log big} on $(X, \Delta)$ if, for some integer $m >0$, the following three conditions are satisfied:

(i) the $\mathbf{Q}$-Cartier $\mathbf{Q}$-divisor $mD$ is a Cartier divisor,

(ii) the base locus $\Bs \vert mD \vert$ does not contain any generic point of the log canonical centers of $(X, \Delta)$,

(iii) the rational map $\phi := \Phi_{\vert mD \vert}$ is birational to its image and, furthermore, is isomorphic onto its image in some neighborhood of every generic point of the log canonical centers of $(X, \Delta)$.
\end{DefnA}

\begin{RemA}\label{RemA:BCL}
Boucksom-Cacciola-Lopez (\cite{BCL}) proved that, for a big divisor $D$, the strong log bigness of $D$ is equivalent to the condition that the augmented base locus $\mathbf{B}_{+} (D)$ does not contain any generic point of the log canonical centers.
\end{RemA}

\begin{ThmA}[\cite{BCL} and \cite{BH}-\cite{Ca}]
If the log canonical divisor $K_X + \Delta$ is strongly log big on the dlt pair $(X, \Delta)$, then the log canonical ring $\bigoplus _{k \ge 0} H^0 (X, {\cal O}_X (\lfloor k (K_X + \Delta) \rfloor))$ is finitely generated over the field $\mathbf{C}$.
\end{ThmA}

From Remark A.\ \ref{RemA:BCL}, the theorem above is a reduction of Birkar-Hu (\cite{BH}) or Cacciola (\cite{Ca}).
But we give a straightforward proof to the theorem.

\begin{proof}
We follow the notation in Definition A.\ \ref{DefnA:SLB} for the $\mathbf{Q}$-Cartier $\mathbf{Q}$-divisor $K_X + \Delta$.
From the assumption and the divisorial log terminal theorem (Szab\'o \cite{Sz}), there exists some nonempty Zariski-open subset $U$ of $X$ with the following properties:

(i) $U$ contains all the generic points of log canonical centers of $(X, \Delta)$,

(ii) $\Bs \vert m(K_X + \Delta) \vert \cap U = \emptyset$,

(iii) the rational map $\phi \vert_U$ is isomorphic onto its image,

(iv) the pair $(U, \Delta \vert_U)$ is a nonsingular variety $U$ with a reduced simply normal crossing divisor $\Delta \vert_U$ on $U$.

We set $Y := [\text{the image of the rational map $\phi$}]$.

From the resolution lemma (\cite{Sz}) due to Szab\'o, there exists a log resolution $\mu \colon X_1 \to X$ of the pair $(X, \Delta)$ such that $\mu \vert_{\mu^{-1} (U)}$ is isomorphic and that $\Exc (\mu)$ is divisorial.

Here the exceptional locus $\Exc$ denotes the locus where the morphism is not isomorphic.

From the Hironaka resolution theorem, by the repetitions of blowups along smooth subvarieties included in the singular locus of $Y$, we have a resolution $\nu \colon Y_1 \to Y$ of singularities such that $\nu \vert_{\nu^{-1} (\phi (U))}$ is isomorphic and that there exists some $\nu$-antiample effective divisor whose support coincides with the exceptional locus $\Exc (\nu)$.

We consider the rational map $\phi_1 := \nu^{-1} \phi \mu$.
Then we obtain the commutative diagram:

\begin{equation}
\begin{CD}
X_1 @>\text{$\phi_1$}>\text{rational map}> Y_1 \\
@V\text{$\mu$}VV @VV\text{$\nu$}V \\
X @>\text{rational map}>\text{$\phi$}> Y
\end{CD}
\end{equation}

We take the elimination of indeterminacy for the rational map $\phi_1$:

$$X_1 \stackrel{\mu_1}{\longleftarrow} X_2 \stackrel{\phi_2}{\longrightarrow} Y_1.$$

Note that the morphism $\mu_1 \vert_{\mu_1^{-1} (\mu^{-1}(U))}$ is isomorphic.

Because the variety $X_1$ ($Y_1$, respectively) is $\mathbf{Q}$-factorial, there exists some $\mu_1$-antiample ($\phi_2$-antiample, respectively) effective divisor whose support coincides with $\Exc (\mu_1)$ ($\Exc (\phi_2)$, respectively).

We put $\mu_2 := \mu \mu_1$.
Then we have the commutative diagram:

\begin{equation}
\begin{CD}
X_2 @>\text{$\phi_2$}>> Y_1 \\
@V\text{$\mu_2$}VV @VV\text{$\nu$}V \\
X @>\text{rational map}>\text{$\phi$}> Y
\end{CD}
\end{equation}

We have the relation
$$\vert {\mu_2}^* (m(K_X + \Delta)) \vert = \vert {\phi_2}^* \nu^* A_0 \vert + B_0$$
between complete linear systems where $A_0$ is a hyperplane section of $Y$ and $B_0$ is an effective divisor on $X_2$ with the property that $\Supp (B_0) \cap {\mu_2}^{-1}(U) = \emptyset$.

We consider the $\mathbf{Q}$-divisor $A := \frac{1}{m} A_0$.

We set $\Gamma := \Supp ({\mu_2}^{-1}_{*} \Delta) \cup \Exc (\mu_2) \cup \Exc (\nu \phi_2) \cup \Supp (B_0)$, which is purely codimension $1$ in $X_2$.

Consider the Zariski open subset $V := ( X_2 \setminus \Gamma ) \cup {\mu_2}^{-1} (U)$.

We note that $V \cap \Gamma = \Supp ({\mu_2}^{-1}_* \Delta) \cap {\mu_2}^{-1} (U)$ and that $X_2 \setminus V \subset \Gamma$.

From the resolution lemma (\cite{Sz}) due to Szab\'o, we have a projective morphism $\mu_3 \colon \tilde{X} \to X_2$ which satisfies the following four conditions:

(a) $\mu_3$ is a composition of blowups along smooth subvarieties,

(b) $\mu_3 \vert_{{\mu_3}^{-1} (V)}$ is isomorphic,

(c) $\tilde{X}$ is nonsingular,

(d) ${\mu_3}^{-1} (\Gamma)$ is a divisor with only simple normal crossings.

Putting $\tilde{\nu} := \nu \phi_2 \mu_3$ and $\tilde{\mu} := \mu_2 \mu_3$, we have the diagram
\begin{equation}
\begin{CD}
\tilde{X} @>\text{$\tilde{\nu}$}>> Y \\
@V\text{$\tilde{\mu}$}VV \\
X
\end{CD}
\end{equation}
and have the property that the loci $\Exc (\tilde{\mu})$ and $\Exc (\tilde{\nu})$ are divisorial.
We define the $\mathbf{Q}$-divisors $\tilde{E}$, $\tilde{F}$ and $\tilde{B}$ by the following relations:

(i) $K_{\tilde{X}} + {\tilde{\mu}}^{-1}_* \Delta + \tilde{E} = {\tilde{\mu}}^* (K_X + \Delta) + \tilde{F}$,

(ii) ${\tilde{\mu}}^* (K_X + \Delta) = \tilde{\nu}^* A + \tilde{B}$ (i.e.\ $\tilde{B} = \frac{1}{m} \mu_3^* B_0$),

(iii) $\tilde{E}, \tilde{F}, \tilde{B} \geq 0$,

(iv) $\tilde{E}$ and $\tilde{F}$ have no common irreducible component.

Then we have the properties that
$\Supp ({\tilde{\mu}}^{-1}_* \Delta + \tilde{E} + \tilde{F} + \tilde{B}) \cup \Exc (\tilde{\mu}) \cup \Exc (\tilde{\nu})$ is a reduced divisor with only simple normal crossings, that $\Supp (\tilde{E}$ + $\tilde{F}$ + $\tilde{B})$ is disjoint from ${\tilde{\mu}}^{-1} (U)$ and that $\lfloor \tilde{E} \rfloor = 0$.

There exists some $\nu$-antiample ($\phi_2$-antiample, $\mu_3$-antiample, respectively) effective divisor whose support is $\Exc (\nu)$ ($\Exc (\phi_2)$, $\Exc (\mu_3)$, respectively).
Thus the $\mathbf{Q}$-divisors
$$\nu^*A -S_1,$$
$${\phi_2}^* (\nu^* A - S_1) - S_2$$
and
$$\tilde{A} := {\mu_3}^* ({\phi_2}^* (\nu^* A - S_1) - S_2) - S_3$$
are ample for some effective $\mathbf{Q}$-divisors $S_1$, $S_2$, $S_3$ with the property that $\Supp(S_1) = \Exc(\nu)$, $\Supp(S_2) = \Exc(\phi_2)$, $\Supp(S_3) = \Exc(\mu_3)$.
We write ${\tilde{\nu}}^* A = \tilde{A} + S$ where $S := {\mu_3}^* {\phi_2}^* S_1 + {\mu_3}^* S_2 + S_3 \geqq 0$ and note that $\Supp (S) = \Exc (\tilde{\nu})$.
Then
\begin{align}
K_{\tilde{X}} + \tilde{\mu}^{-1}_* \Delta + \tilde{E} &= {\tilde{\nu}}^* A + \tilde{B} + \tilde{F} \\
&= \tilde{A} + S + \tilde{B} + \tilde{F} \\
&= \tilde{A} + F
\end{align}
where $F := S + \tilde{B} + \tilde{F}$.

Here $\Supp(F) \cap \tilde{\mu}^{-1} (U) = \emptyset$.
Thus $\Supp (F)$ does not include any log canonical center of the smooth pair $(\tilde{X}, \tilde{\mu}^{-1}_* \Delta + \tilde{E} )$.
For a sufficiently small rational number $\delta >0$, the $\mathbf{Q}$-divisor $\delta (\tilde{\mu}^{-1}_* \Delta) + \tilde{A}$ is ample.
Therefore, for a sufficiently large and divisible integer $l > 0$, the divisor $l (\delta (\tilde{\mu}^{-1}_* \Delta) + \tilde{A})$ is very ample and linearly equivalent to some prime divisor $H$ such that $\Supp ( \tilde{\mu}^{-1}_* \Delta + \tilde{E} + F + H )$ is with only simple normal crossings and that $H$ does not include any log canonical center of the smooth pair $(\tilde{X}, \tilde{\mu}^{-1}_* \Delta + \tilde{E})$.
We have the following relation and the klt (i.e. Kawamata log terminal) pair $(\tilde{X}, (1-\epsilon \delta) \tilde{\mu}^{-1}_* \Delta + \tilde{E} + \epsilon F + \frac{\epsilon}{l} H )$ for a sufficiently small rational number $\epsilon >0$:
\begin{align}
(1 + \epsilon )(K_{\tilde{X}} + \tilde{\mu}^{-1}_* \Delta + \tilde{E}) &\sim_{\mathbf{Q}} K_{\tilde{X}} + \tilde{\mu}^{-1}_* \Delta + \tilde{E} + \epsilon \tilde{A} + \epsilon F \\
&\sim_{\mathbf{Q}} K_{\tilde{X}} + \tilde{\mu}^{-1}_* \Delta + \tilde{E} + \epsilon (\frac{1}{l} H - \delta \tilde{\mu}^{-1}_* \Delta) + \epsilon F \\
&\sim_{\mathbf{Q}} K_{\tilde{X}} + (1 - \epsilon \delta) \tilde{\mu}^{-1}_* \Delta + \tilde{E} + \epsilon F + \frac{\epsilon}{l} H
\end{align}

From the Birkar-Cascini-Hacon-McKernan theorem (\cite{BCHM}), the log canonical ring $\bigoplus _{m \ge 0} H^0 (\tilde{X}, {\cal O}_{\tilde{X}} (\lfloor m (K_{\tilde{X}} + (1 - \epsilon \delta) \tilde{\mu}^{-1}_* \Delta + \tilde{E} + \epsilon F + \frac{\epsilon}{l} H) \rfloor))$ for a klt pair is finitely generated.

Consequently the equivalence between the finite generation of the log canonical ring and that of some truncation of this ring implies the assertion. 

\end{proof}

\noindent{\bf Disclosure}

\noindent
The content of Remark \ref{Hist} (History)(see \cite{Fu14}) was presented in the short communications at ICM 2014 (Seoul) on August 16 in the year 2014.
Remark \ref{Hist} corrects a chronological typo and a chronological mistake in Fukuda (\cite{Fu14}).

\noindent{\bf Conflict of Interests}

\noindent
The author declares that there is no conflict of interests regarding the publication of this article.

\noindent{\bf Acknowledgments}

\noindent
In this series of research, the author was supported by the research grant of Gifu Shotoku Gakuen University in the years 2011 and 2014.
The author would like to thank the referee who carefully read the manuscript and gave the advice to improve the explanation.

\bigskip
Faculty of Education, Gifu Shotoku Gakuen University

Yanaizu-cho, Gifu City, Gifu 501-6194, Japan

fukuda@ha.shotoku.ac.jp

the current email address: fukuda@gifu.shotoku.ac.jp


\begin{thebibliography}{99}
\bibitem{Am98}
F. Ambro: {\em The locus of log canonical singularities}.
arXiv: math/9806067, Jun 1998.

\bibitem{Am05}
F. Ambro: {\em The moduli $b$-divisor of an lc-trivial fibration}.
Compos.\ Math.\ {\bf 141}(2005), 385--403.

\bibitem{BCEKPRSW}
Th.\ Bauer, F. Campana, Th.\ Eckl, S. Kebekus, Th.\ Peternell, S. Rams, T. Szemberg and L. Wotslaw: {\em A reduction map for nef line bundles}.
in {\em Complex geometry, Goettingen 2000}, Springer-Verlag, Berlin, 2002, pp.\ 27--36.

\bibitem{BCHM}
C. Birkar, P. Cascini, C. Hacon and J. McKernan: {\em Existence of minimal models for varieties of log general type}.
J. Amer.\ Math.\ Soc.\ {\bf 23}(2010), 405--468.

\bibitem{BH}
C. Birkar and Z. Hu: {\em Log canonical pairs with good augmented base loci}.
Compos.\ Math.\ {\bf 150}(2014), 579--592. 

\bibitem{BCL}
S. Boucksom, S. Cacciola and A. F. Lopez: {\em Augmented base loci and restricted volumes on normal varieties}.
Math.\ Z. {\bf 278}(2014), 979--985.

\bibitem{Ca}
S. Cacciola: {\em On the semiampleness of the positive part of CKM Zariski decompositions}. 
Math.\ Proc.\ Cambridge Philos.\ Soc.\ {\bf 156}(2014), 1--23.

\bibitem{DHP}
J.-P. Demailly, C. Hacon and M. P\u aun: {\em Extension theorems, Non-vanishing and the existence of good minimal models}.
Acta Math.\ {\bf 210} (2013), 203--259.

\bibitem{Fuj09}
O. Fujino: {\em On Kawamata's theorem}.
in {\em Classification of algebraic varieties}, Eur.\ Math.\ Soc.\, Zurich, 2011, pp.\ 305--315.

\bibitem{Fu1999}
S. Fukuda: {\em A base point free theorem of Reid type. II}.
Proc.\ Japan Acad.\ Ser.\ A Math.\ Sci.\ {\bf 75}(1999), 32--34.

\bibitem{Fu02}
S. Fukuda: {\em Tsuji's numerically trivial fibrations and abundance}.
Far East J. Math.\ Sci.\ (FJMS) {\bf 5}(2002), 247--257.

\bibitem{Fu04}
S. Fukuda: {\em On log canonical divisors that are log quasi-numerically positive}.
Cent.\ Eur.\ J. Math.\ {\bf 2}(2004), 377--381. 

\bibitem{Fu11}
S. Fukuda: {\em An elementary semi-ampleness result for log canonical divisors}.
Hokkaido Math.\ J. {\bf 40}(2011), 357--360.

\bibitem{Fu14}
S. Fukuda: {\em Quasi-numerically positive log canonical divisors}.
in {\em , Abstracts of the International Congress of Mathematicians (Short communications, Poster sessions), Seoul, August 13--21, 2014}, Seoul ICM 2014 organizing committee, Seoul, 2014, pp.\ 93--94.

\bibitem{Ka85a}
Y. Kawamata: {\em Pluricanonical systems on minimal algebraic varieties}.
Invent.\ Math.\ {\bf 79}(1985), 567--588.

\bibitem{Ka}
Y. Kawamata: {\em On the length of an extremal rational curve}.
Invent.\ Math.\ {\bf 105}(1991), 609--611.

\bibitem{KeMaMc}
S. Keel, K. Matsuki and J. McKernan: {\em Log abundance theorem for threefolds}.
Duke Math.\ J. {\bf 75}(1994), 99--119.

\bibitem{Ko}
J. Koll\'ar et al.\ : {\em Flips and Abundance for Algebraic Threefolds}.
Ast\'erisque {\bf 211}(1992).

\bibitem{KoMo}
J. Koll\'ar and S. Mori: {\em Birational geometry of algebraic varieties}.
Cambridge Tracts in Mathematics {\bf 134}(1998).

\bibitem{La}
V. Lazic: {\em Adjoint rings are finitely generated}.
arXiv: 0905.2707, May 2009.

\bibitem{Ny86}
N. Nakayama: {\em Invariance of the plurigenera of algebraic varieties under minimal model conjectures}.
Topology {\bf 25}(1986), 237--251.

\bibitem{Re83}
M. Reid: {\em Minimal models of canonical $3$-folds}.
in {\em Algebraic varieties and analytic varieties, Tokyo 1981}, Adv.\ Stud.\ Pure Math.\ {\bf 1}(1983), North-Holland, 131--180.

\bibitem{Sh}
V.-V. Shokurov: {\em Letters of a bi-rationalist. VII. Ordered termination}.
Proc.\ Steklov Inst.\ Math.\ {\bf 264}(2009), 178--200.

\bibitem{Sz}
E. Szab\'o: {\em Divisorial log terminal singularities}.
J. Math.\ Sci.\ Univ.\ Tokyo {\bf 1}(1994), 631--639. 

\bibitem{Tsuji}
H. Tsuji: {\em Numerically trivial fibrations}.
math.\ AG/0001023, Oct 2000.

\end{thebibliography}
\end{document}